\numberwithin{equation}{section}    
\newtheorem{theorem}{Theorem}[section]
\newtheorem{proposition}[theorem]{Proposition}
\theoremstyle{definition}
\newtheorem{remark}[theorem]{Remark}
\newtheorem{example}[theorem]{Example}
\newcommand{\maj}{{\mathrm {maj}}}
\newcommand{\Hilb}{{\mathrm {Hilb}}}
\newcommand{\grFrob}{{\mathrm {grFrob}}}
\newcommand{\SYT}{{\mathrm {SYT}}}
\newcommand{\Frob}{{\mathrm {Frob}}}
\newcommand{\symm}{{\mathfrak{S}}}
\newcommand{\CC}{{\mathbb {C}}}
\newcommand{\QQ}{{\mathbb {Q}}}
\newcommand{\ZZ}{{\mathbb {Z}}}
\newcommand{\NN}{{\mathbb{N}}}
\newcommand{\xx}{{\mathbf {x}}}
\newcommand{\HH}{{\mathbf {H}}}
\newcommand{\LL}{{\mathbf {L}}}
\newcommand{\RR}{{\mathbf {R}}}
\newcommand{\yy}{{\mathbf {y}}}
\newcommand{\II}{{\mathbf {I}}}
\newcommand{\TT}{{\mathbf {T}}}
\begin{document}

\title[Macdonald polynomials and cyclic sieving]
{Macdonald polynomials and cyclic sieving}

\author{Jaeseong Oh}
\address
{Department of Mathematical Sciences \newline \indent
Seoul National University \newline \indent
1 Gwanak-ro, Gwanak-gu,
Seoul, 08826, South Korea}
\email{jaeseong$\_$oh@snu.ac.kr}

\begin{abstract}
The Garsia--Haiman module is a bigraded $\mathfrak{S}_n$-module whose Frobenius image is a Macdonald polynomial. The method of orbit harmonics promotes an $\mathfrak{S}_n$-set $X$ to a graded polynomial ring. The orbit harmonics can be applied to prove cyclic sieving phenomena which is a notion that encapsulates the fixed-point structure of finite cyclic group action on a finite set. By applying this idea to the Garsia--Haiman module, we provide cyclic sieving results regarding the enumeration of matrices that are invariant under certain cyclic row and column rotation and translation of entries.
\end{abstract}

\keywords{Garsia--Haiman module, Kostka polynomial, cyclic sieving, orbit harmonics}
\maketitle

\section{Introduction}
\label{Introduction}

Since Macdonald \cite{Mac88} defined Macdonald polynomials $\widetilde{H}_\mu(\xx;q,t)$ and conjectured the Schur positivity of them, the Macdonald polynomial has been one of the central objects in algebraic combinatorics. Even though a combinatorial formula for Macdonald polynomials is given \cite{HHL04} and the Schur positivity of Macdonald polynomials is proved \cite{Hai01}, not much is known about an explicit combinatorial formula for the $(q,t)$-Kostka polynomials, which are the Schur coefficients of the Macdonald polynomial. In this paper, we discuss some enumerative results involving the $(q,t)$-Kostka polynomials in the words of cyclic sieving phenomena. This will provide a series of identities between the number of matrices with certain cyclic symmetries and evaluation of $(q,t)$-Kostka polynomials at a root of unity, uncovering a part of the mystery of the $(q,t)$-Kostka polynomials.

To begin, we define the cyclic seiving phenomena. Let $X$ be a set with an action of a cyclic group $C$. Fix a generator $c$ of $C$ and let $\zeta$ be a root of unity having the same multiplicative order as $c$. Let $X(q)\in\ZZ[q]$ be a polynomial in $q$. We say that the triple $(X, C, X(q))$ exhibits the \emph{cyclic sieving phenomenon (CSP)} \cite{RSW04} if for any integer $r$ the number of fixed points of $c^r$ in $X$ is equal to the evalation of $X(q)$ at $q=\zeta^r$, i.e.
\begin{equation*}
\left|X^{c^r}\right| = \left|\{ x \in X \,:\, c^r \cdot x = x \}\right| = X(\zeta^r).
\end{equation*}

More generally, let $X$ be a set with an action of a direct product of $k$ cyclic groups $C_1\times C_2\times \cdots \times C_k$. For each $i=1,2,\dots,k$, fix a generator $c_i$ for $C_i$. Let $X(q_1,q_2,\dots,q_k)\in\ZZ[q_1,q_2,\dots,q_k]$ be a polynomial in $k$ variables. Following \cite{BRS08}, we say the triple $\left(X, C_1\times C_2\times \cdots \times C_k, X(q_1,q_2,\dots,q_k)\right)$ exhibits the \emph{k-ary-cyclic sieving phenomenon (k-ari-CSP)} if for any integers $r_1, r_2, \dots, r_k$ the number of fixed points of $(c_1^{r_1},c_2^{r_2},\dots, c_k^{r_k})$ in $X$ is equal to the evalation of $X(q_1,q_2,\dots,q_k)$ at $(q_1,q_2,\dots,q_k)=(\zeta_1^{r_1}, \zeta_2^{r_2},\dots, \zeta_k^{r_k})$, i.e.
\begin{equation*}
\left|X^{(c_1^{r_1},c_2^{r_2},\dots,c_k^{r_k})}\right| = \left|\{ x \in X \,:\, (c_1^{r_1},c_2^{r_2},\dots,c_k^{r_k}) \cdot x = x \}\right| = X(\zeta_1^{r_1}, \zeta_2^{r_2}, \dots, \zeta_k^{r_k}),
\end{equation*}
where $\zeta_i$ is a root of unity having the same multiplicative order as $c_i$. In this paper, we provide instances of tricyclic sieving and quadracyclic sieving phenomena, i.e. $k$-ary-CSP for $k=3$ and $k=4$.

The main tool we used to prove our results is the theory of orbit harmonics. The orbit harmonics is a tool in combinatorial representation theory that promotes an (ungraded) action of $\mathfrak{S}_n$ on a finite set $X$ to a graded action of $\mathfrak{S}_n$ on a polynomial ring quotient, by viewing $X$ as an $\mathfrak{S}_n$-stable point locus in $\mathbb{C}^n$. The idea goes as follows. Let $X\subseteq\mathbb{C}^n$ be a finite set which is closed under the action of $\mathfrak{S}_n\times C$, where
\begin{itemize}
    \item a symmetric group $\mathfrak{S}_n$ acts on $\mathbb{C}^n$ by permuting coordinates, and
    \item $C$ is a finite cyclic group acting on $\mathbb{C}^n$ by scaling a root of unity.
\end{itemize}
Let $\mathbf{I}(X)$ be the ideal of polynomials in $\mathbb{C}[\xx_n]:=\mathbb{C}[x_1,\dots,x_n]$ which vanish on $X$. Then we have an isomorphism
\begin{equation}\label{isomorphism I-ideal}
    \mathbb{C}[X]\cong\mathbb{C}[\xx_n]/\mathbf{I}(X),
\end{equation}
where $\mathbb{C}[X]$ is the algebra of all functions $X\rightarrow \mathbb{C}$. We further define a homogeneous ideal
\begin{equation*}
    \mathbf{T}(X):=\langle\tau(f):f\in\mathbf{I}(X)\setminus\{0\}\rangle\subseteq\mathbb{C}[\xx_n],
\end{equation*}
where $\tau(f)$ denote the top degree homogeneous part of $f$. Then the isomorphism~\eqref{isomorphism I-ideal} extends to an isomorphism 
\begin{equation}\label{isomorphism T-ideal}
    \mathbb{C}[X]\cong\mathbb{C}[\xx_n]/\mathbf{I}(X)\cong\mathbb{C}[\xx_n]/\mathbf{T}(X).
\end{equation}
Note that  $\mathbb{C}[\xx_n]/\mathbf{T}(X)$ admits an additional structure of a graded $\mathfrak{S}_n$-module.

Using the isomorphism~\eqref{isomorphism T-ideal}, the author and Rhoades provided a `generating theorem' for sieving results \cite[Theorem 3.4]{OR20}. By varying the choice of combinatorial locus $X$, one can obtain various sieving results concerning $X$. The associated polynomial is given by a variant of the graded Frobenius image $\grFrob(\CC[\xx_n]/\mathbf{T}(X);q)$.

In this paper, we adopt orbit harmonics to the diagonal orbit harmonics to obtain a `generating theorem' (Theorem~\ref{sieving-generator}) for sieving results of the combinatorial locus $X\subseteq\mathbb{C}^{2n}$ with a diagonal action of $\mathfrak{S}_n$ on $X$. A precise explanation of the diagonal orbit harmonics is given in Section~\ref{subsection Orbit harmonics and cyclic sieving}. 

To prove the Macdonald positivity conjecture, Garsia and Haiman \cite{GH93} suggested a bigraded $\symm_n$-module $\HH_\mu$ (called the Garsia--Haiman module) for a partition $\mu$ whose Frobenius image is the Macdonald polynomial of $\mu$. This module is defined as the $\mathbb{C}$-span of a variant of the Vandermonde determinant and its partial derivatives (See Section~\ref{modules of garsia--Haiman} for detail). Garsia and Haiman defined another module $\RR_\mu$ via orbit harmonics which is later shown to be isomorphic to the original Garsia--Haiman module $\HH_\mu$.  Therefore by taking $\mu=(m^n)$, we can apply Theorem~\ref{sieving-generator} to this module $\RR_\mu$ to obtain triCSP for $n\times m$ matrices of content $(1^{mn})$ (each of $1,2,\dots,mn$ appears once) and biCSP for $n\times m$ matrices of given content $\nu$ ($i$ appears $\nu_i$ times). This relates roots of unity specializations of $(q,t)$-Kostka polynomials with enumerations of matrices invariant under certain rotation of row index and column index and translation of entries.

\begin{theorem}\label{Main theorem 1}
Let $X_{(m^n)}$ be the set of $n\times m$ matrices of content $(1^{mn})$. A product of cyclic groups $\ZZ_n\times\ZZ_m\times\ZZ_{mn}$ acts on $X_{(m^n)}$ by row rotation, column rotation, and adding 1 modulo $mn$ to each entry. In addition for a composition $\nu\models mn$, let $X_{(m^n),\nu}$ be the set of $n\times m$ matrices of content $\nu$ where a product of cyclic groups $\ZZ_n\times\ZZ_m$ acts on $X_{(m^n),\nu}$ by row and column rotation. Then we have the followings.
\begin{itemize}
    \item $\left(X_{(m^n)},\ZZ_n\times\ZZ_m\times\ZZ_{mn},X_{(m^n)}(q,t,z)\right)$ exhibits triCSP, where $$X_{(m^n)}(q,t,z)=\sum_{\lambda\vdash mn} \widetilde{K}_{\lambda,(m^n)}(q,t)f^{\lambda}(z).$$
    \item $\left(X_{(m^n),\nu},\ZZ_n\times\ZZ_m,X_{(m^n),\nu}(q,t)\right)$ exhibits biCSP, where $$X_{(m^n),\nu}(q,t)=\sum_{\lambda\vdash mn} \widetilde{K}_{\lambda,(m^n)}(q,t)K_{\lambda,\nu}.$$
\end{itemize}
Here, $\widetilde{K}_{\lambda,\mu}(q,t)$ ($K_{\lambda,\mu}$, respectively) denotes the modified $(q,t)$-Kostka polynomial (Kostka number, respectively) and $f^{\lambda}(z):=\sum_{T\in{\SYT}(\lambda)}z^{\operatorname{maj}(T)}$ is the fake degree polynomial, where ${\SYT}(\lambda)$ is the set of standard tableaux of shape $\lambda$ and $\operatorname{maj}$ is the major index.
\end{theorem}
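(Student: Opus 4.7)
The plan is to realize $X_{(m^n)}$ as an $\symm_{mn}$-orbit in $\CC^{2mn}$ which, after applying diagonal orbit harmonics via Theorem~\ref{sieving-generator}, becomes the Garsia--Haiman module $\RR_{(m^n)}$; the two sieving polynomials then fall out of the bigraded Frobenius image of $\RR_{(m^n)}$.

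To set up the embedding, fix $\omega_n = e^{2\pi i/n}$ and $\omega_m = e^{2\pi i/m}$. For each matrix $M \in X_{(m^n)}$, define
\[
\phi(M) = \bigl(\omega_n^{i_1}, \ldots, \omega_n^{i_{mn}},\, \omega_m^{j_1}, \ldots, \omega_m^{j_{mn}}\bigr) \in \CC^{2mn},
\]
where entry $k$ of $M$ lies in cell $(i_k, j_k)$. Then $\phi(X_{(m^n)})$ is a single free $\symm_{mn}$-orbit whose orbit-harmonics quotient recovers $\RR_{(m^n)}$ as a bigraded $\symm_{mn}$-module. Under $\phi$ the three ambient actions translate into: scaling all $x$-coordinates by $\omega_n^{-1}$ (row rotation), scaling all $y$-coordinates by $\omega_m^{-1}$ (column rotation), and the long cycle of $\symm_{mn}$ (adding $1$ modulo $mn$ to entries). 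The first two commute with $\symm_{mn}$ and multiply the $(a,b)$-bigraded piece of $\RR_{(m^n)}$ by $\omega_n^{-a}\omega_m^{-b}$, so all three actions are compatible with Theorem~\ref{sieving-generator}.

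For the triCSP, Theorem~\ref{sieving-generator} gives
\[
\bigl|X_{(m^n)}^{(c_1^{r_1},c_2^{r_2},c_3^{r_3})}\bigr| = \mathrm{tr}\bigl((c_1^{r_1},c_2^{r_2},c_3^{r_3}) \mid \RR_{(m^n)}\bigr) = \sum_{\lambda \vdash mn} \widetilde{K}_{\lambda,(m^n)}\bigl(\omega_n^{r_1},\omega_m^{r_2}\bigr)\, \chi^\lambda(c^{r_3}),
\]
where $c$ is the long cycle in $\symm_{mn}$. By the classical identity $\chi^\lambda(c^{r_3}) = f^\lambda(\omega_{mn}^{r_3})$ (Springer's regular-element theorem, or equivalently Rhoades' cyclic sieving for $\SYT(\lambda)$ under promotion), this agrees with $X_{(m^n)}(q,t,z)$ evaluated at the prescribed roots of unity. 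For the biCSP with content $\nu$, identify $X_{(m^n),\nu}$ with $X_{(m^n)}/\symm_\nu$ for the Young subgroup $\symm_\nu \subseteq \symm_{mn}$, so that $\CC[X_{(m^n),\nu}] \cong \RR_{(m^n)}^{\symm_\nu}$; passing to $\symm_\nu$-invariants in each irreducible $V^\lambda$ produces the Kostka number $K_{\lambda,\nu}$, and the analogous trace computation yields the stated polynomial.

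The main technical obstacle is the careful bookkeeping needed to confirm that the orbit harmonics isomorphism intertwines all three commuting group actions simultaneously — in particular that $\TT(\phi(X_{(m^n)}))$ is $\symm_{mn}$-stable and bihomogeneous under the two scaling actions, and that the resulting bigraded $\symm_{mn}$-module agrees with $\RR_{(m^n)}$ as originally defined via the integer-cell orbit. The first two points follow from the diagonal nature of the scaling and permutation actions on $\phi(X_{(m^n)})$, while the last amounts to the standard identification between the multiplicative and additive versions of the Garsia--Haiman point locus.
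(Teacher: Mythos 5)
Your proposal is correct and follows essentially the same route as the paper: you realize $X_{(m^n)}$ as the Garsia--Haiman point locus with root-of-unity coordinates, invoke the isomorphism $\RR_{(m^n)}\cong\CC[X_{(m^n)}]\cong\HH_{(m^n)}$ together with $\grFrob(\HH_{(m^n)};q,t)=\widetilde{H}_{(m^n)}(\xx;q,t)$, and apply parts (1) and (2) of Theorem~\ref{sieving-generator} (Springer's theorem for the $f^\lambda(z)$ factor, Frobenius reciprocity giving $K_{\lambda,\nu}$ for the $\symm_\nu$-invariants). The only deviations are cosmetic (e.g.\ the direction of the scaling action, which is immaterial for the CSP statement).
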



We generalize Theorem~\ref{Main theorem 1} in two directions. First direction is to generalize biCSP in the second bullet point of Theorem~\ref{Main theorem 1} to triCSP. We say a composition $\nu$ has a cyclic symmetry of order $a$ if $\nu_i=\nu_{i+a}$ always, where the subscripts are interpreted modulo the length $l(\nu)$ of $\nu$. In the second bullet point of the above theorem, if $\nu$ has a cyclic symmetry of order $a$ dividing $l(\nu)$, the set $X_{(m^n),\nu}$ possesses an additional action of a cyclic group $\ZZ_{l(\nu)/a}$ by adding $a$ modulo $l(\nu)$ to each entry. Then one might ask if there is natural $z$-analogue of $X_{(m^n),\nu}(q,t)$ to give triCSP for $X_{(m^n),\nu}$. We give an answer of this question in the following theorem.

\begin{theorem}\label{Main theorem 2}
Let $\nu\models mn$ be a composition with a cyclic symmetry of order $a$ dividing $l(\nu)$. Let $X_{(m^n),\nu}$ be the set of $n\times m$ matrices of content $\nu$. A product of cyclic groups $\ZZ_n\times\ZZ_m\times\ZZ_{l(\nu)/a}$ acts on $X_{(m^n),\nu}$ by row rotation, column rotation, and adding $a$ modulo $l(\nu)$ to each entry. Then the triple $\left(X_{(m^n),\nu},\ZZ_n\times\ZZ_m\times\ZZ_{l(\nu)/a},X_{(m^n),\nu}(q,t,z)\right)$ exhibits the triCSP, where
$$X_{(m^n),\nu}(q,t,z)=\sum_{\lambda\vdash mn} \widetilde{K}_{\lambda,(m^n)}(q,t)\widetilde{K}_{\lambda,\nu}(z).$$
Here, $\widetilde{K}_{\lambda,\mu}(q,t)$ ($\widetilde{K}_{\lambda,\mu}(z)$, respectively) denotes the modified $(q,t)$-Kostka polynomial ($z$-Kostka polynomial, respectively).
\end{theorem}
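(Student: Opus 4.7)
The plan is to bootstrap from the triCSP of the first bullet of Theorem~\ref{Main theorem 1} via the destandardization map $X_{(m^n)} \to X_{(m^n),\nu}$, exploit the full $\symm_{mn}$-equivariant structure of the Garsia--Haiman module $\RR_{(m^n)}$, and then identify the resulting character average with $\widetilde{K}_{\lambda,\nu}(z)$ at roots of unity.

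Let $\pi : X_{(m^n)} \to X_{(m^n),\nu}$ send each entry $j \in \{1,\ldots,mn\}$ to its block index in $\nu$; this map is $\ZZ_n \times \ZZ_m$-equivariant, and because $\nu_i = \nu_{i+a}$, the permutation $\gamma \in \symm_{mn}$ defined by ``add $s := mna/l(\nu)$ modulo $mn$'' covers the $\ZZ_{l(\nu)/a}$-generator on $X_{(m^n),\nu}$. The Young subgroup $\symm_\nu \subseteq \symm_{mn}$ acts on $X_{(m^n)}$ freely by value-permutation, commutes with row and column rotation, is normalized by $\gamma$, and its orbits are exactly the fibers of $\pi$. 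A standard orbit-counting argument then gives
\begin{equation*}
    \bigl| X_{(m^n),\nu}^{(g_1^{r_1},g_2^{r_2},g_3^{r_3})} \bigr| = \frac{1}{|\symm_\nu|}\sum_{\sigma\in\symm_\nu}\bigl| X_{(m^n)}^{(g_1^{r_1},g_2^{r_2},\sigma\gamma^{r_3})} \bigr|.
\end{equation*}

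Since $X_{(m^n)}$ is a free $\symm_{mn}$-torsor under value-permutation, the diagonal orbit harmonics construction underlying Theorem~\ref{sieving-generator} identifies $\mathbb{C}[X_{(m^n)}]$ with $\RR_{(m^n)}$ as bigraded $\symm_{mn}$-modules, whose Frobenius image is $\widetilde{H}_{(m^n)} = \sum_\lambda \widetilde{K}_{\lambda,(m^n)}(q,t) s_\lambda$. Computing the graded trace of $(g_1^{r_1},g_2^{r_2},h)$ on both sides yields
\begin{equation*}
    \bigl| X_{(m^n)}^{(g_1^{r_1},g_2^{r_2},h)} \bigr| = \sum_{\lambda\vdash mn} \widetilde{K}_{\lambda,(m^n)}(\zeta_n^{r_1},\zeta_m^{r_2})\,\chi^\lambda(h)
\end{equation*}
for every $h \in \symm_{mn}$. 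Substituting $h = \sigma\gamma^{r_3}$ and interchanging sums reduces the theorem to the symmetric-function identity
\begin{equation*}
    \widetilde{K}_{\lambda,\nu}(\zeta_{l(\nu)/a}^{r_3}) = \frac{1}{|\symm_\nu|} \sum_{\sigma\in\symm_\nu} \chi^\lambda(\sigma\gamma^{r_3}).
\end{equation*}

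The right-hand side is the trace of $\gamma^{r_3}$ on the $\symm_\nu$-invariant subspace $(V^\lambda)^{\symm_\nu}$ of the Specht module $V^\lambda$, well-defined because $\gamma$ normalizes $\symm_\nu$. The main obstacle is therefore to identify this trace with the root-of-unity evaluation of $\widetilde{K}_{\lambda,\nu}(z)$. I would establish it by combining the interpretation of $\widetilde{K}_{\lambda,\nu}(z)$ as the Hilbert series of $(V^\lambda)^{\symm_\nu}$ under the charge (Hotta--Springer) grading with the Lascoux--Leclerc--Thibon evaluation of Kostka--Foulkes polynomials at roots of unity, which expresses them as character values on the conjugacy class of $\gamma$. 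As sanity checks, setting $z=1$ recovers the second bullet of Theorem~\ref{Main theorem 1}, and taking $\nu = (1^{mn})$ specializes to Springer's classical identity $f^\lambda(\zeta_{mn}^r)=\chi^\lambda(c_{mn}^r)$ for the long cycle.
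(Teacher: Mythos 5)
Your reduction is correct as far as it goes, and it is a genuinely different route from the paper's: you count fixed points on the free locus $X_{(m^n)}$, where the equivariant orbit-harmonics isomorphism $\CC[X_{(m^n)}]\cong\RR_{(m^n)}$ (Theorem~\ref{theorem:Garsia--Haiman module}) together with $\grFrob(\RR_{(m^n)};q,t)=\widetilde{H}_{(m^n)}$ gives the trace formula for an arbitrary value permutation $h$, and you then average over $\symm_\nu$; your lift $\gamma$ of the translation, the freeness of the $\symm_\nu$-action, and the fact that $\gamma$ normalizes $\symm_\nu$ are all fine, as are the sanity checks at $z=1$ and $\nu=(1^{mn})$. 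The paper instead tensors $\RR_{(m^n)}$ with the Garsia--Procesi module $\LL_\nu$ built from the Tanisaki locus $Y_\nu$, takes $\symm_{mn}$-invariants, and uses $g^{(mn)}_{\lambda,\lambda'}=\delta_{\lambda,\lambda'}$ (Proposition~\ref{prop:Kronecker}); your averaging identity is essentially that Frobenius reciprocity carried out at the level of traces.

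The genuine gap is the last step: the identity $\widetilde{K}_{\lambda,\nu}(\zeta_{l(\nu)/a}^{r})=\frac{1}{|\symm_\nu|}\sum_{\sigma\in\symm_\nu}\chi^\lambda(\sigma\gamma^{r})$, i.e.\ $\mathrm{tr}\bigl(\gamma^{r}|_{(S^\lambda)^{\symm_\nu}}\bigr)=\widetilde{K}_{\lambda,\nu}(\zeta_{l(\nu)/a}^{r})$, is exactly where the content of the theorem in the $z$-direction lies, and your proposed justification does not establish it. Knowing that the multiplicity space $(S^\lambda)^{\symm_\nu}\cong\mathrm{Hom}_{\symm_{mn}}(S^\lambda,\CC[W_\nu])$ carries a charge grading with Hilbert series $\widetilde{K}_{\lambda,\nu}(z)$ says nothing, by itself, about the operator induced by the specific permutation $\gamma$; evaluating a Hilbert series at $\zeta^r$ computes the trace of $\gamma^r$ only if one also shows that $\gamma$ acts on that graded model by the scalar $\zeta^{a\cdot\deg}$ in each degree. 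That equivariance is the crux, and it is precisely what the paper supplies by realizing the words of content $\nu$ as the Tanisaki locus $Y_\nu\subseteq\CC^{mn}$ (translation of letters equals coordinatewise scaling by $\zeta^a$) and invoking the $\symm_{mn}\times\ZZ_{l(\nu)/a}$-equivariant isomorphism $\CC[Y_\nu]\cong\LL_\nu$ with $\grFrob(\LL_\nu;z)=\widetilde{Q}_\nu(\xx;z)$. The Lascoux--Leclerc--Thibon evaluations you cite are statements about the polynomials at roots of unity, not directly about traces of $\gamma^r$ on $(S^\lambda)^{\symm_\nu}$; extracting your identity from them requires substantial extra work (this is Rhoades' route, which also needs the Stanton--White rim hook correspondence). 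So either prove the lemma via the Garsia--Procesi/orbit-harmonics equivariance above---at which point your argument becomes a trace-level repackaging of the paper's proof---or carry out the combinatorial derivation in detail; as written, the decisive step is asserted rather than proved.
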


The second generalization starts from rephrasing Theorem~\ref{Main theorem 1}. For a $mn \times mn$ permutation matrix $M$, we associate a $n \times m$ matrix $\phi(M)$ of content $(1^{mn})$ by letting $k$ be the $(i,j)$-entry if the $(n(i-1)+j, k)$-entry of $M$ is 1. For example, if we set $m=2, n=2$, then the following permutation matrix $M$ in the left corresponds to a $2 \times 2$ matrix $\phi(M)$ in the right.
\begin{figure}[h]
  $M=\begin{pmatrix}
0 & 0 & 1 & 0 \\
1 & 0 & 0 & 0 \\
0 & 1 & 0 & 0 \\
0 & 0 & 0 & 1
  \end{pmatrix}$
  \qquad$\phi(M)=\begin{pmatrix}
  3 & 1\\
  2 & 4
  \end{pmatrix}$
  \end{figure}\\
Under this correspondence, the row rotation, column rotation, and adding 1 modulo $mn$ to each entry of $\phi(M)$ corresponds to `external' row rotation (of order $m$), `internal' row rotation (of order $n$), and column rotation of $M$. Here, by external row rotation, we mean sending the $k$th $n$ rows (from $n(k-1)+1$-th to $nk$-th row) to the next $n$ rows (from $nk+1$-th to $n(k+1)$-th row). Here the row numbers are interpreted modulo $mn$. By internal row rotation, we mean sending each row to the next row except for the $nk$-th row for $k=1,2,\dots,m$. For the $nk$-th row we send this row to the $n(k-1)+1$-th row. In our running example, if we apply a row rotation and a column rotation to $\phi(M)$, we get $\begin{pmatrix} 2&4 \\ 3&1\end{pmatrix}$ and $\begin{pmatrix} 1&3 \\ 4&2\end{pmatrix}$. Each of these corresponds to the matrix 
\[
\begin{pmatrix}
0&1&0&0\\0&0&0&1 \\ 0&0&1&0\\ 1&0&0&0 \end{pmatrix} 
\text{ and } 
\begin{pmatrix}
1&0&0&0\\0&0&1&0 \\ 0&0&0&1\\ 0&1&0&0 \end{pmatrix}.
\] 
The first matrix can also be obtained from $M$ by sending the first two rows to the third and the fourth row and sending the third and the fourth row to the first and the second row which is an external row rotation (of order 2). Similarly, the second matrix can be obtained from $M$ by applying an internal row rotation (of order 2). Now we can understand Theorem~\ref{Main theorem 1} as a tricyclic sieving result concerning $mn \times mn$ matrices under external row rotation, internal row rotation, and column rotation. One might ask if we could get a quadracyclic sieving result concerning external and internal rotation to both columns and rows. The following theorem gives a positive answer.

\begin{theorem}\label{Main theorem 3}
Let $l=mn=ab$ be a positive integer with two factorizations. Let $\mathfrak{S}_{l}$ be the set of $l\times l$ permutation matrices. A product of cyclic groups $\ZZ_n\times\ZZ_m\times\ZZ_{b}\times \ZZ_{a}$ acts on $\mathfrak{S}_l$ by external row rotation, internal row rotation, external column rotation, and internal column rotation. Then the triple $\left(\mathfrak{S}_l, \ZZ_n\times\ZZ_m\times\ZZ_{b}\times \ZZ_{a}, \mathfrak{S}_l(q,t,z,w)\right)$ exhibits the quadraCSP, where

$$\mathfrak{S}_l(q,t,z,w)=\sum_{\lambda\vdash l} \widetilde{K}_{\lambda,(m^n)}(q,t)\widetilde{K}_{\lambda,(a^b)}(z,w).$$

Here, $\widetilde{K}_{\lambda,\mu}(q,t)$ denotes the modified $(q,t)$-Kostka polynomial.
\end{theorem}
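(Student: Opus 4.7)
\emph{Proof plan.} The plan is to extend the diagonal orbit harmonics approach behind Theorems~\ref{Main theorem 1} and \ref{Main theorem 2} by considering a product of two Garsia--Haiman loci. Let $Y_1 \subseteq \mathbb{C}^{2l}$ be the free $\symm_l$-orbit underlying the proof of Theorem~\ref{Main theorem 1}: its $2l$ coordinates, chosen from roots of unity, record the row and column positions of cells of $(m^n)$, so that $\mathbb{Z}_n$ and $\mathbb{Z}_m$ act on $\mathbb{C}^{2l}$ by scaling the two coordinate groups, and orbit harmonics gives $\mathbb{C}[\xx,\yy]/\mathbf{T}(Y_1) \cong \HH_{(m^n)}$ as a bigraded $\symm_l$-module. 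Let $Y_2 \subseteq \mathbb{C}^{2l}$ be the analogous locus for $(a^b)$, carrying the commuting scaling actions of $\mathbb{Z}_b$ and $\mathbb{Z}_a$ with $\mathbb{C}[\mathbf{z},\mathbf{w}]/\mathbf{T}(Y_2) \cong \HH_{(a^b)}$.

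Consider the product $Y := Y_1 \times Y_2 \subseteq \mathbb{C}^{4l}$ with the diagonal $\symm_l$-action and the commuting $\mathbb{Z}_n \times \mathbb{Z}_m \times \mathbb{Z}_b \times \mathbb{Z}_a$ action. Because $Y_1$ and $Y_2$ occupy disjoint coordinate subsets of $\mathbb{C}^{4l}$, one has $\mathbf{T}(Y) = \mathbf{T}(Y_1) + \mathbf{T}(Y_2)$ and therefore
\[
\mathbb{C}[\xx,\yy,\mathbf{z},\mathbf{w}]/\mathbf{T}(Y) \;\cong\; \HH_{(m^n)} \otimes \HH_{(a^b)}
\]
as quadrigraded $\symm_l$-modules, with the diagonal $\symm_l$-action on the right-hand side.

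Each $Y_i$ is a free $\symm_l$-orbit of size $l!$, so the diagonal action on $Y$ is free and $Y/\symm_l$ has cardinality $l!$. Fixing base points in each factor, the ratio map gives a bijection $Y/\symm_l \leftrightarrow \symm_l \cong \mathfrak{S}_l$, the set of $l \times l$ permutation matrices. Since each scaling cyclic action commutes with $\symm_l$, it descends to $Y/\symm_l$ as left or right multiplication by a specific permutation of $\{1,\ldots,l\}$---the one encoding the row or column shift of cells in the corresponding rectangle. A combinatorial unwinding parallel to the $\phi$-translation preceding the statement of Theorem~\ref{Main theorem 3} then identifies these four descended actions with the external row, internal row, external column, and internal column rotations on permutation matrices.

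Finally, applying the quadrigraded extension of the sieving generator Theorem~\ref{sieving-generator} to $Y$ identifies the CSP polynomial for $Y/\symm_l$ with the quadrigraded Hilbert series of the $\symm_l$-invariants:
\[
\mathfrak{S}_l(q,t,z,w) \;=\; \Hilb\bigl((\HH_{(m^n)} \otimes \HH_{(a^b)})^{\symm_l};\,q,t,z,w\bigr).
\]
Decomposing each Garsia--Haiman module into its $\symm_l$-isotypic components and using Schur orthogonality $\dim(V_\lambda \otimes V_\mu)^{\symm_l} = \delta_{\lambda,\mu}$ collapses the right-hand side to $\sum_{\lambda \vdash l} \widetilde{K}_{\lambda,(m^n)}(q,t)\,\widetilde{K}_{\lambda,(a^b)}(z,w)$, which is the polynomial stated in the theorem. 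The main obstacle is the combinatorial matching in the third paragraph---verifying that the four descended scalar actions correspond precisely to the four prescribed matrix rotations, which rests on a careful choice of how cells are indexed by $\{1,\ldots,l\}$ on each side of the product; the remaining assembly is routine once the diagonal orbit harmonics framework is in place.
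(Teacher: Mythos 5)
Your proposal is correct and follows essentially the same route as the paper: the paper likewise forms the product of the two rectangular loci $X_{(m^n)}\times X_{(a^b)}$ with the diagonal $\symm_l$-action, identifies the $\symm_l$-orbits with $l\times l$ permutation matrices carrying the four rotation actions, and computes $\Hilb\bigl([\RR_{(m^n)}\otimes\RR_{(a^b)}]^{\symm_l};q,t,z,w\bigr)$ via the Kronecker-coefficient identity $g^{(l)}_{\lambda,\lambda'}=\delta_{\lambda,\lambda'}$ (your ``Schur orthogonality'' step) before invoking the sieving generating theorem. Your explicit remark that $\mathbf{T}(Y_1\times Y_2)=\mathbf{T}(Y_1)+\mathbf{T}(Y_2)$ is just a spelled-out version of the tensor factorization the paper uses implicitly, so there is no substantive difference in approach.
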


There have been similar results discovered which relate root of unity specializations of $q$-Kostka polynomials or Macdonald polynomials and fixed point enumerations of matrices or fillings of tableaux (see \cite{Rho10, AU19, BRS08} for example). We remark some results which are especially close to our results. First of all, the set $X_{(m^n),\nu}$ in Theorem~\ref{Main theorem 2} bijects with the set of $0,1$-matrices with column content $\mu$ and row content $(1^{mn})$. If we specialize $n=1$, then Theorem~\ref{Main theorem 2} recovers \cite[Theorem 1.2]{Rho10}. In addition, Barcelo, Reiner and Stanton considered biCSP concerning row and column rotation of permutation matrices (\cite[Theorem 1.4]{BRS08}, in the case $W=\mathfrak{S}_n$). Theorem~\ref{Main theorem 3} specializes to their result if we take $n=1, b=1$.

In \cite[Theorem 1.3]{Rho10}, using Hall--Littlewood polynomial, Rhoades showed that $\NN$-matrices with fixed column content $\mu$ and row content $\nu$ exhibits biCSP. It should be mentioned that we modify the argument of Rhoades to prove Theorem~\ref{Main theorem 2} and Theorem~\ref{Main theorem 3} in Section~\ref{subsection: proof of the main theorem 2}. 

The remainder of this paper is organized as follows. In  Section~\ref{Preliminaries} we give background on combinatorics, symmetric functions, the representation theory of $\symm_n$, and the Garsia--Haiman modules. In Section~\ref{Section: Sieving generating theorem and orbit harmonics}, we illustrate the diagonal orbit harmonics and how to obtain sieving results from orbit harmonics. We then present a combinatorial locus that gives a graded module isomorphic to Garsia--Haiman module via orbit harmonics. In Section~\ref{Section: proofs}, we provide proofs of Theorem~\ref{Main theorem 1}, Theorem~\ref{Main theorem 2} and Theorem~\ref{Main theorem 3}. We conclude this paper with some remarks in Section~\ref{concluding remarks}.

\section{Preliminaries}
\label{Preliminaries}

\subsection{Combinatorics}\label{subsection Combinatorics}
A \emph{weak composition} of $n$ is a sequence of non-negative integers which sum to $n$. A \emph{composition} is a weak composition which consists of positive integers. A \emph{partition} of $n$ is a composition of $n$ which is weakly decreasing. We denote $\mu\models n$ and $\lambda\vdash n$ for a composition $\mu$ and a partition $\lambda$ of $n$. For a composition $\mu=(\mu_1,\dots,\mu_k)$, the \emph{length} $l(\mu)$ of $\mu$ is $k$.

For a partition $\lambda\vdash n$ we abbuse our notation so that a partition $\lambda$ also denotes for its \emph{Young diagram}. We draw Young diagrams in French style:
$$\lambda=\{(i,j)\in\ZZ_{\ge0}\times\ZZ_{\ge0}:i<\lambda_{j+1}\}.$$ The elements of Young diagram are called cells. For example,
\begin{align*}
    \lambda &= (4,3,1)\\
            &=\{(0,0),(1,0),(2,0),(3,0),(0,1),(1,1),(2,1),(0,2)\}
\end{align*}

\quad\qquad\qquad\qquad\qquad = \quad\begin{young}
   \cr
 & & \cr
 & & & \cr
\end{young}\\
We define the \emph{conjugate} $\lambda'$ to be the partition obtained by reflecting $\lambda$ with respect to the diagonal line $x=y$ in the plane. 

A \emph{tableau} of a partition $\lambda$ is a filling $T:\lambda\rightarrow\ZZ_{>0}$. In this case, we call the \emph{shape} of $T$ is $\lambda$. The \emph{content} of a tableau $T$ of $\lambda$ is a weak composition $(T_1,T_2,\dots)$ of $n$, where $T_i$ is the number of $i$'s appearing in $T$. A tableau $T$ is called \emph{semistandard} if the entries in each row are weakly increasing (left to right) and the entries in each column are strictly increasing (bottom to top). The \emph{Kostka number} $K_{\lambda,\mu}$ is the number of semistandard tableaux of shape $\lambda$ and content $\mu$. A semistandard tableau is called \emph{standard} if its content is $(1,1,\dots)$. The set of standard tableaux of shape $\lambda$ is denoted by $\SYT(\lambda)$. Examples of semistandard tableau and standard tableau of shape $(4,3,1)$ are shown in the left and the right below, respectively.

\begin{center}
\begin{young}
 4  \cr
 2& 2& 4\cr
 1& 1& 2& 3\cr
\end{young}\qquad \qquad
\begin{young}
 5  \cr
 2 & $\mathbf{4}$ & 7\cr
 $\mathbf{1}$ & $\mathbf{3}$ & $\mathbf{6}$ & 8\cr
\end{young}

\end{center}

For a standard tableau $T$, a \emph{descent} is an index $i$ such that $i+1$ is in the upper row than $i$. The \emph{major index} $\maj(T)$ of $T$ is defined as the sum of all descents in $T$. For the standard tableau given in the right above, $1, 3, 4$ and $6$ are descents (descents of the tableau are written in bold), so the major index of the tableau is $1+3+4+6=14$. The \emph{fake degree polynomial} of a partition $\lambda$ is defined by the major index generating function for the standard tableaux of shape $\lambda$, i.e.,
\begin{equation*}
    f^\lambda(q):=\sum_{T\in \SYT(\lambda)}q^{\maj(T)}.
\end{equation*}
\subsection{Symmetric functions}
\label{subsection Symmetric functions}
Let $\Lambda=\bigoplus_{d\ge0}\Lambda_d$ be the ring of symmetric functions in an infinite number of variables $\xx=(x_1,x_2,\dots)$ over $\mathbb{C}(q,t)$. Here $\Lambda_d$ denotes the subspace of $\Lambda$ consisting of symmetric functions of homogeneous degree $d$. 

Bases of $\Lambda_n$ are indexed by partitions $\lambda\vdash n$. Among various bases of $\Lambda_n$, one of the most important basis is given by Schur functions. The \emph{Schur function} $s_\lambda$ of a partition $\lambda$ is defined by
\begin{equation*}
    s_\lambda(\xx):=\sum_{T}\xx^T,
\end{equation*}
where the sum is over all semistandard tableaux of shape $\lambda$ and $\xx^T=x_1^{T_1}x_2^{T_2}\cdots$.

We write $\langle \cdot, \cdot\rangle$ for the Hall inner product 
\begin{equation*}
    \langle s_\lambda, s_\mu \rangle =\delta_{\lambda,\mu},
\end{equation*}
where $\delta_{\lambda,\mu}$ is the Kronecker delta. The (modified) \emph{Macdonald polynomials} $\widetilde{H}_\lambda(\xx;q,t)$ indexed by partitions $\lambda\vdash n$ form another basis of $\Lambda_n$. They are defined by the unique family satisfying the following \emph{triangulation} and \emph{normalization} axioms \cite{HHL04},
\begin{itemize}
    \item $\widetilde{H}_\lambda[\xx(1-q);q,t]=\sum_{\lambda\ge\mu} a_{\lambda,\mu}(q,t)s_\lambda$,
    \item $\widetilde{H}_\lambda[\xx(1-t);q,t]=\sum_{\lambda\ge\mu'} b_{\lambda,\mu}(q,t)s_\lambda$, 
    \item $\langle \widetilde{H}_\mu, s_{(n)}\rangle=1$,
\end{itemize}
for suitable coefficients $a_{\lambda,\mu}, b_{\lambda,\mu'}\in\QQ(q,t)$. Here, a partial order $\le$ called \emph{dominance order} of partitions of $n$ is defined by 
\begin{equation*}
    \lambda\le\mu \text{ if } \lambda_1+\cdots+\lambda_k\le\mu_1+\cdots\mu_k \text{ for all } k,
\end{equation*} and $[\cdot]$ denotes the plethystic substitution. These axioms are equivalent with Macdonald's triangularity and orthogonality axioms.

Expanding the Macdonald polynomial with Schur functions, we may write 
\begin{equation*}
    \widetilde{H}_\mu(\xx;q,t)=\sum_\lambda \widetilde{K}_{\lambda,\mu}(q,t)s_{\lambda}(\xx),
\end{equation*}
where the sum is over partitions $\lambda$ of $n$. The coefficients $\widetilde{K}_{\lambda,\mu}(q,t)$ are called the (modified) $(q,t)$-\emph{Kostka polynomials}. A combinatorial description of general $(q,t)$-Kostka polynomials is unknown, and it is one of the most important open problems in algebraic combinatorics. Since $\widetilde{K}_{\lambda,\mu}(1,1)=|\SYT(\lambda)|$, the most desirable form of the combinatorial formula would be a generating function for the standard tabelaux.

The (modified) \emph{Hall--Littlewood polynomial} $\widetilde{Q}_\mu(X;q)$ and $q$-\emph{Kostka polynomial} $\widetilde{K}_{\lambda,\mu}(q)$ can be obtained by specializing $t=0$ to the Macdonald polynomial $\widetilde{H}_\mu(X;q,t)$ and the $(q,t)$-Kostka polynomial $\widetilde{K}_{\lambda,\mu}(q,t)$. The $q$-Kostka polynomial $\widetilde{K}_{\lambda,\mu}(q)$ can also be defined as the generating function of the cocharge statistics for the semistandard tableaux of shape $\lambda$ and content $\mu$ (see \cite{Rho10} for a definition of cocharge statistics). 

\subsection{Representation theory of $\mathfrak{S}_n$}\label{subsection rep of Sn}

Irreducible representations of the symmetric group $\mathfrak{S}_n$ are in one to one correspondence with partitions $\lambda$ of $n$. We let $S^\lambda$ be the corresponding irreducible representation. If $V$ is a finite dimensional $\mathfrak{S}_n$-module, there is a unique way of decomposing $V$ into irreducibles as $V=\bigoplus_{\lambda\vdash n} c_\lambda S^\lambda$. The \emph{Frobenius image} of $V$ is the symmetric function defined by 
\begin{equation*}
    \Frob(V):=\sum_{\lambda\vdash n} c_\lambda s_\lambda.
\end{equation*}
If $V$ is graded (or bigraded) $\mathfrak{S}_n$-module as $V=\bigoplus_{d\ge0} V_d$ (or $V=\bigoplus_{d,e\ge0} V_{d,e}$) the graded Frobenius image is the symmetric function over $\CC(q)$ (or $\CC(q,t)$) given by

\begin{equation*}
\grFrob(V;q) :=\sum_{d\ge0}\Frob(V_d)q^d
\end{equation*}
\begin{equation*}
    (\text{or }\grFrob(V;q,t):=\sum_{d,e\ge0} \Frob(V_{d,e})q^d t^e).
\end{equation*}
If $V = \bigoplus_{d \geq 0} V_d$ (or $V=\bigoplus_{d,e\ge 0} V_{d,e}$ is any graded (or bigraded) vector space, its {\em Hilbert series} is
 \begin{equation*}
 \Hilb(V;q) = \sum_{d \geq 0} \dim (V_d) q^d
 \end{equation*}
 \begin{equation*}
    (\text{or }\Hilb(V;q,t):=\sum_{d,e\ge0} \dim(V_{d,e})q^d t^e).
\end{equation*}

We recall two facts that we use in the proof of the main results. The first one is the theorem of Springer. Note that the original result of Springer deals with the action of a regular element of an arbitrary complex reflection group. For simplicity, we focus only on the action of a long cycle in a symmetric group $\symm_n$.

\begin{theorem}(\cite{Spr74})
\label{springer-theorem}
Let $c=(1,2,\dots,n)$ be a long cycle of $\symm_n$ and $\chi^\lambda:\symm_n \rightarrow \CC$ be the irreducible character associated to the $\mathfrak{S}_n$-representation $S^\lambda$. Then we have
\begin{equation*}
\chi^\lambda(c^r) = f^{\lambda}(\zeta^r),
\end{equation*}
where $f^\lambda(q) \in \CC[q]$ is the fake degree polynomial and $\zeta$ is a (primitive) $n$-th root of unity.
\end{theorem}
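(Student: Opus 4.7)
The plan is to exploit the coinvariant algebra $R_n = \CC[x_1, \ldots, x_n]/(e_1, \ldots, e_n)$. It is classical that $R_n$ carries the regular representation of $\symm_n$ and that its graded Frobenius image is
\begin{equation*}
\grFrob(R_n; q) = \sum_{\lambda \vdash n} f^\lambda(q)\, s_\lambda,
\end{equation*}
so that the graded trace satisfies $\mathrm{tr}(g;\, R_n,\, q) = \sum_{\lambda \vdash n} \chi^\lambda(g)\, f^\lambda(q)$ for every $g \in \symm_n$. First I would compute this trace in a second way. By Chevalley's theorem, $\CC[x_1, \ldots, x_n] \cong R_n \otimes \CC[e_1, \ldots, e_n]$ as graded $\symm_n$-modules, so combining this with Molien's formula for the full polynomial ring gives
\begin{equation*}
\sum_{\lambda \vdash n} \chi^\lambda(g)\, f^\lambda(q) = \frac{\prod_{i=1}^n (1 - q^i)}{\det(1 - gq \mid \CC^n)}.
\end{equation*}
Specializing $g = c^r$ and noting that the eigenvalues of $c$ on the permutation representation $\CC^n$ are exactly $1, \zeta, \ldots, \zeta^{n-1}$, the denominator becomes $\prod_{j=0}^{n-1}(1 - \zeta^{jr}q)$.

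Next I would reformulate the target identity via discrete Fourier inversion on $C_n = \langle c \rangle$. Any function on $\ZZ/n\ZZ$ is determined by its Fourier coefficients, so $\chi^\lambda(c^r) = f^\lambda(\zeta^r)$ for every $r$ is equivalent to the claim that, for every $k$,
\begin{equation*}
\frac{1}{n}\sum_{r=0}^{n-1}\chi^\lambda(c^r)\,\zeta^{-rk} = \#\{T \in \SYT(\lambda) : \maj(T) \equiv k \pmod{n}\},
\end{equation*}
the right-hand side arising as the $k$-th Fourier coefficient of $r \mapsto f^\lambda(\zeta^r)$ after swapping the order of summation in $\sum_T \zeta^{r\,\maj(T)}$. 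By Frobenius reciprocity, the left-hand side is precisely the multiplicity of $S^\lambda$ in the induced representation $\Ind_{C_n}^{\symm_n}\zeta^{k\cdot}$, so the task reduces to the Kraskiewicz--Weyman multiplicity formula for these cyclic inductions.

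The main obstacle is this last step. The rational-function manipulations above are purely formal, but Kraskiewicz--Weyman genuinely requires combinatorial input; the cleanest route I know is to exhibit a $C_n$-equivariant filtration of the Garsia--Stanton descent basis of $R_n$ whose associated graded pieces are indexed by $\SYT(\lambda)$ with cyclic eigenvalue $\zeta^{\maj(T)}$. Producing such a refinement---equivalently, constructing a cyclic action on standard tableaux whose orbit structure is governed by the residue of $\maj$ modulo $n$---is where the real content of Springer's theorem for $\symm_n$ resides, and it is the only step that cannot be dispatched by a formal calculation with the coinvariant algebra.
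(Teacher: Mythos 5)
Your write-up is not a proof but a reduction: after the Molien/coinvariant setup and the Fourier-inversion step you arrive at the Kraskiewicz--Weyman multiplicity statement (the multiplicity of $S^\lambda$ in $\Ind_{C_n}^{\symm_n}\zeta^{k\cdot}$ equals $\#\{T\in\SYT(\lambda):\maj(T)\equiv k \bmod n\}$) and then stop, saying one would need a $C_n$-equivariant refinement of the descent basis that you do not construct. That last step is the entire content of the theorem, so as it stands the argument has a genuine gap. Note also that the paper itself offers no proof to compare against: Theorem~\ref{springer-theorem} is quoted from Springer \cite{Spr74} as background, so anything you submit must actually be self-contained.

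The irony is that your first thread, which you abandon, can be pushed to a complete proof, so your closing claim that the identity ``cannot be dispatched by a formal calculation with the coinvariant algebra'' is too pessimistic. Granting your opening display $\grFrob(R_n;q)=\sum_\lambda f^\lambda(q)s_\lambda$ (Lusztig--Stanley, which is where $\maj$ enters and which you already assumed), evaluate the Molien quotient at $q=\zeta^r$ for an \emph{arbitrary} $\sigma\in\symm_n$ of cycle type $\mu$: with $d=\gcd(r,n)$ one has $\det(1-q\sigma)=\prod_k(1-q^{\mu_k})$, and the limit of $\prod_{i=1}^n(1-q^i)\big/\prod_k(1-q^{\mu_k})$ as $q\to\zeta^r$ vanishes unless $\mu=((n/d)^d)$, i.e.\ unless $\sigma$ is conjugate to $c^r$, in which case it equals $d!\,(n/d)^d$, the centralizer order. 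Hence $\sum_\lambda f^\lambda(\zeta^r)\chi^\lambda(\sigma)$ agrees, as a class function of $\sigma$, with $\sum_\lambda \chi^\lambda(c^r)\chi^\lambda(\sigma)$ by column orthogonality (characters of $\symm_n$ are real and $c^r\sim c^{-r}$), and linear independence of the irreducible characters gives $f^\lambda(\zeta^r)=\chi^\lambda(c^r)$ for every $\lambda$. No Kraskiewicz--Weyman filtration or rim-hook combinatorics is needed; alternatively, the same trace identity follows in the spirit of this paper by applying orbit harmonics to the free $\symm_n$-orbit of a regular eigenvector of $c$, which is how Springer's original argument goes.
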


The second fact we need is about the Kronecker coefficients. For $\symm_n$-modulues $V$ and $W$, define the \emph{inner tensor product} $V\otimes W$ to be the the usual tensor product of vector spaces with $\mathfrak{S}_n$-module structure given by $$\sigma\cdot(v\otimes w)=(\sigma \cdot v)\otimes (\sigma \cdot w).$$ For given partitions $\lambda, \mu$ and $\nu$ of $n$, the \emph{Kronecker coefficients} $g^\lambda_{\mu,\nu}$ is the multiplicity of $S^\lambda$ in the inner tensor product $S^\mu\otimes S^\nu$, i.e. $$S^\mu\otimes S^\nu\cong\bigoplus g^\lambda_{\mu,\nu}S^\lambda.$$ Although giving an explicit combinatorial description of general Kronecker coefficients is difficult in general (it is one of the major open problems in algebraic combinatorics), we have the following identity for the special case when $\lambda$ is a single row.

\begin{proposition}\label{prop:Kronecker} For partitions $\mu,\nu$ of $n$, the Kronecker coefficient
$g^{(n)}_{\mu,\nu}=\delta_{\mu,\nu},$
where $\delta_{\mu,\nu}$ is the Kronecker delta.
\end{proposition}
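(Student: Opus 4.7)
The proposition is a short character-theoretic identity, so my plan is to reduce it to the orthogonality relations for irreducible characters of $\symm_n$. First I would recall that $S^{(n)}$ is the trivial representation, so its character $\chi^{(n)}$ is the constant function $1$ on $\symm_n$. By the definition of the inner tensor product, the character of $S^\mu \otimes S^\nu$ is the pointwise product $\chi^\mu \cdot \chi^\nu$. Hence the multiplicity of $S^{(n)}$ in $S^\mu \otimes S^\nu$ is the inner product
\begin{equation*}
g^{(n)}_{\mu,\nu} \;=\; \langle \chi^\mu \cdot \chi^\nu,\; \chi^{(n)} \rangle \;=\; \frac{1}{n!}\sum_{\sigma \in \symm_n} \chi^\mu(\sigma)\,\chi^\nu(\sigma).
\end{equation*}

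The next step is to recognize the right-hand side as $\langle \chi^\mu,\chi^\nu\rangle$. This uses the elementary but crucial fact that the irreducible characters of $\symm_n$ take values in $\ZZ$ (in particular they are real-valued), so no complex conjugation is needed when comparing $\chi^\nu$ with $\overline{\chi^\nu}$. Once that is noted, the orthogonality relation $\langle \chi^\mu, \chi^\nu\rangle = \delta_{\mu,\nu}$ for irreducible characters of a finite group immediately yields $g^{(n)}_{\mu,\nu} = \delta_{\mu,\nu}$.

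Alternatively, and perhaps more cleanly, I would phrase this via Schur's lemma: the multiplicity of the trivial representation in $S^\mu \otimes S^\nu$ equals $\dim \mathrm{Hom}_{\symm_n}(S^\nu, (S^\mu)^*)$, and since $\symm_n$-irreducibles are self-dual over $\CC$, this is $\dim \mathrm{Hom}_{\symm_n}(S^\nu, S^\mu)$, which by Schur's lemma is $\delta_{\mu,\nu}$. There is no real obstacle here; the only subtlety is remembering to use the reality (self-duality) of $\symm_n$-irreducibles, but this is standard and requires no further argument in the paper.
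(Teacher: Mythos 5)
Your argument is correct: the multiplicity of the trivial module $S^{(n)}$ in $S^\mu\otimes S^\nu$ is $\frac{1}{n!}\sum_{\sigma\in\symm_n}\chi^\mu(\sigma)\chi^\nu(\sigma)=\langle\chi^\mu,\chi^\nu\rangle=\delta_{\mu,\nu}$, using that $\symm_n$-characters are integer-valued (equivalently, that the irreducibles are self-dual, as in your Schur-lemma variant). The paper states Proposition~\ref{prop:Kronecker} without proof as a standard fact, and your character-orthogonality argument is precisely the standard justification it implicitly relies on, so there is nothing to add or correct.
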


\subsection{Modules of Garsia and Haiman}\label{modules of garsia--Haiman}

To each partition $\mu$ of $n$, let $(a_1, b_1), \dots, (a_n, b_n)$ be the cells of $\mu$, taken in some arbitrary order. Then we define
\[
\Delta_\mu:=\mathrm{det}(x_i^{a_j} y_i^{b_j})_{1\le i,j\le n}.
\]
The $\symm_n$-module $\mathbf{H}_\mu$ is the smallest vector space over $\CC$ that contains the determinant $\Delta_\mu$ and its partial derivatives with respect to any of the variables $x_i$'s and $y_i$'s for $1\le i \le n$. For example, for a partition $\mu=(3,2)$, the cells of $\mu$ are $(0,0), (1,0), (2,0), (0,1)$ and $(1,1)$ and the corresponding determinant is given by 
\begin{equation*}
\Delta_\mu:=\mathrm{det}\left[\begin{array}{ccccc}
1 & x_1 & x_{1}^{2} & y_1 & x_{1} y_1 \\
1 & x_2 & x_{2}^{2} & y_2 & x_{3} y_3 \\
1 & x_3 & x_{3}^{2} & y_3 & x_{3}y_3 \\
1 & x_4 & x_4^2 & y_4 & x_{4}y_4 \\
1 & x_5 & x_5^{2} & y_5 & x_{5}y_5
\end{array}\right].
\end{equation*}
Then the module $\mathbf{H}_\mu$ is given by the $\mathbb{C}$-span 
\begin{equation*}
\mathbb{C}\{\partial_{\xx_I}\partial_{\yy_J}\Delta_\mu\}_{I,J}
\end{equation*}
where $\partial_{\xx_I}:=\partial_{x_{i_1}}\dots\partial_{x_{i_k}}$ for a multiset $I=\{i_1,\cdots,i_k\}$ and $\partial_{\yy_J}$ is defined similarly. The symmetric group $\symm_n$ acts on $\HH_\mu$ diagonally i.e. permuting $x$ and $y$ coordinates simultaneously. Since the action of $\symm_n$ preserves the degree of $\xx_n$ and $\yy_n$, the module $\HH_\mu$ is bigraded $\symm_n$-module, where the bigrade is given by degree of $\xx_n$ and $\yy_n$. This is called the \emph{Garsia--Haiman module}. Haiman \cite{Hai01} proved the \emph{$n!$ conjecture} which asserts that this module is of dimension $n!$ regardless of $\mu$, and moreover, the graded Frobenius image of $\HH_\mu$ is the Macdonald polynomial of $\mu$:
\begin{equation}\label{graded Frob=Macdonald}
    \grFrob(\HH_\mu;q,t)=\widetilde{H}_\mu(\xx;q,t)=\sum_\lambda \widetilde{K}_{\lambda,\mu}(q,t)s_\lambda(\xx),
\end{equation}
which proves the Schur positivity of the Macdonald polynomials.

\section{Sieving generating theorem and diagonal orbit harmonics}
\label{Section: Sieving generating theorem and orbit harmonics}

\subsection{Orbit harmonics and cyclic sieving}
\label{subsection Orbit harmonics and cyclic sieving}
In this section, we introduce a systematic way to generate sieving results using orbit harmonics. The author and Rhoades provided a `generating theorem' for sieving results \cite[Theorem 3.4]{OR20} by exploiting orbit harmonics applied to a locus $X\subseteq \mathbb{C}^n$ with $\symm_n$ acting on $X$ by permuting coordinates. To modify this idea for our purpose, we first explain the \emph{diagonal orbit harmonics} (see \cite{GH96} for more details). Consider a finite set $X\subseteq \CC^{2n}$ which is closed under $\mathfrak{S}_n\times C_1\times C_2$-action where 
\begin{itemize}
    \item a symmetric group $\mathfrak{S}_n$ acts on $\mathbb{C}^{2n}$ by permuting coordinates diagonally, i.e. $$\sigma(x_1,\dots,x_n,y_1,\dots,y_n)=(x_{\sigma(1)},\dots,x_{\sigma(n)},y_{\sigma(1)},\dots,y_{\sigma(n)}),$$
    \item a finite cyclic group $C_1$ is acts on $\mathbb{C}^n$ by scaling $x$-coordinates by a root of unity, and
    \item a finite cyclic group $C_2$ is acts on $\mathbb{C}^n$ by scaling $y$-coordinates by a root of unity.
\end{itemize}
Then the method of orbit harmonics gives us an isomorphism of $\mathfrak{S}_n\times C_1 \times C_2$-modules:
\begin{equation}\label{isomorphism T-ideal xy}
    \mathbb{C}[X]\cong\mathbb{C}[\xx_n,\yy_n]/\mathbf{I}(X).
\end{equation}
We further define homogeneous ideal
\begin{equation*}
    \TT(X):=\langle \tau_x\circ\tau_y(f): f\in \II(X)\setminus \{0\}\rangle \subseteq \CC[\xx_n],
\end{equation*}
where $\tau_x$ and $\tau_y$ is the map taking top degree homogeneous part with respect to $\xx_n$ and $\yy_n$, respectively. Then the isomorphism~\eqref{isomorphism T-ideal xy} extends to an isomorphism
\begin{equation*}
    \mathbb{C}[X]\cong\mathbb{C}[\xx_n,\yy_n]/\mathbf{I}(X)\cong\mathbb{C}[\xx_n,\yy_n]/\mathbf{T}(X),
\end{equation*}
where the last item $\mathbb{C}[\xx_n,\yy_n]/\mathbf{T}(X)$ has an additional structure of graded $\mathfrak{S}_n\times C_1\times C_2$-module on which $C_1$ and $C_2$ acts by scaling in each fixed (bi)degree. Thanks to this isomorphism, we can provide a generating theorem for sieving results in diagonal orbit harmonics whose proof is analogous to the proof of Theorem 3.4 in \cite{OR20}.

\begin{theorem}
\label{sieving-generator}
Let $C$ be the subgroup of $\mathfrak{S}_n$ generated by a long cycle $c=(1,2,\dots,n)$. Fix positive integers $k_1$ and $k_2$. For $j=1,2$, let
$\zeta_j := \exp(2 \pi i / k_j) \in \CC^{\times}$ and $C_j = \langle c_j \rangle \cong \ZZ_{k_j}$ be a cyclic group of order $k_j$. Consider the action of $\mathfrak{S}_n \times C_1\times C_2$ on $\CC^{2n}$ where $c_1$ scales $x$-coordinates by $\zeta_1$, $c_2$ scales $y$-coordinates by $\zeta_2$ and $\mathfrak{S}_n$ acts by permuting coordinates diagonally.

Let $X \subseteq \CC^{2n}$ be a finite point set which is closed under the action of $\mathfrak{S}_n\times C_1\times C_2$.
\begin{enumerate}
\item  Suppose that for $d,e \geq 0$, the isomorphism type of the degree $(d,e)$-piece of 
$\CC[\xx_n,\yy_n]/\TT(X)$ is given by
\begin{equation*}
( \CC[\xx_n,\yy_n]/\TT(X) )_{d,e} \cong \bigoplus_{\lambda\vdash n} c_{\lambda,d,e}S^\lambda.
\end{equation*}
The triple $(X, C_1\times C_2 \times C, X(q,t,z))$ exhibits the tricyclic sieving phenomenon where 
\begin{equation*}
X(q,t,z) = \sum_{\lambda\vdash n}  c_{\lambda}(q,t) f^{\lambda}(z).
\end{equation*}
where $c_{\lambda}(q,t) := \sum_{d,e \geq 0} c_{\lambda,d,e} q^d t^e$.
\item Let $G \subseteq \mathfrak{S}_n$ be a subgroup. The set $X/G$ of $G$-orbits in $X$ carries a natural
$C_1\times C_2$-action and the triple $(X/G, C_1\times C_2, X/G(q,t))$ exhibits the bicyclic sieving phenomenon where
\begin{equation*}
X/G(q,t) = \Hilb( (\CC[\xx_n,\yy_n]/\TT(X))^G; q,t).
\end{equation*}
\end{enumerate}
\end{theorem}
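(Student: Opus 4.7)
My proof plan is to follow the bigraded analogue of the argument in \cite[Theorem 3.4]{OR20}. The engine is the standard identity for permutation modules: since $\CC[X]$ is the permutation representation of $\symm_n \times C_1 \times C_2$ on $X$, for any $g$ in this product the trace of $g$ on $\CC[X]$ equals $|X^g|$. The $\symm_n \times C_1 \times C_2$-equivariant isomorphism
\begin{equation*}
    \CC[X] \;\cong\; \CC[\xx_n,\yy_n]/\II(X) \;\cong\; \CC[\xx_n,\yy_n]/\TT(X)
\end{equation*}
then lets me compute this trace on the bigraded polynomial quotient instead. The key observation is that on the bidegree $(d,e)$ piece, the scaling elements $c_1$ and $c_2$ act as the scalars $\zeta_1^d$ and $\zeta_2^e$ respectively (up to a sign convention in how one passes from the action on points to the action on functions, which merely replaces $\zeta_i$ by $\zeta_i^{-1}$, a root of unity of the same multiplicative order), so they commute with the $\symm_n$-action and split the decomposition cleanly.

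For part (1), decompose
\begin{equation*}
    \bigl(\CC[\xx_n,\yy_n]/\TT(X)\bigr)_{d,e} \;\cong\; \bigoplus_{\lambda \vdash n} c_{\lambda,d,e}\, S^\lambda
\end{equation*}
and compute the trace of $(c_1^{r_1}, c_2^{r_2}, c^r)$ piece by piece. On the bidegree $(d,e)$ part this trace equals
\begin{equation*}
    \zeta_1^{r_1 d}\zeta_2^{r_2 e} \sum_{\lambda \vdash n} c_{\lambda,d,e}\, \chi^\lambda(c^r).
\end{equation*}
Apply Springer's theorem (Theorem~\ref{springer-theorem}) to rewrite $\chi^\lambda(c^r)=f^\lambda(\zeta^r)$, where $\zeta$ is a primitive $n$-th root of unity, and sum over $d,e$. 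Folding the $(d,e)$-sum into $c_\lambda(q,t) = \sum_{d,e} c_{\lambda,d,e} q^d t^e$ produces
\begin{equation*}
    |X^{(c_1^{r_1},c_2^{r_2},c^r)}| \;=\; \sum_{\lambda \vdash n} c_\lambda(\zeta_1^{r_1},\zeta_2^{r_2})\, f^\lambda(\zeta^r) \;=\; X(\zeta_1^{r_1},\zeta_2^{r_2},\zeta^r),
\end{equation*}
which is exactly the triCSP.

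For part (2), the projection $X \to X/G$ identifies $\CC[X/G]$ with the subspace $\CC[X]^G$ of $G$-invariant functions, and the isomorphism of part (1) restricts to $\CC[X/G] \cong (\CC[\xx_n,\yy_n]/\TT(X))^G$ as bigraded $C_1 \times C_2$-modules, since $G \subseteq \symm_n$ commutes with $C_1 \times C_2$. The element $(c_1^{r_1},c_2^{r_2})$ acts on the bidegree $(d,e)$ piece of this invariant subspace as the scalar $\zeta_1^{r_1 d}\zeta_2^{r_2 e}$, so counting its fixed points on $X/G$ via the trace gives
$\Hilb((\CC[\xx_n,\yy_n]/\TT(X))^G;\zeta_1^{r_1},\zeta_2^{r_2})$, establishing the biCSP.

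The only step that demands genuine verification — and the main obstacle in adapting the singly-graded argument of \cite{OR20} to this setting — is the $C_1 \times C_2$-equivariance of the passage from $\II(X)$ to $\TT(X)$. The point is that $C_1$ rescales every monomial by a power of $\zeta_1$ depending only on its $x$-degree, and analogously for $C_2$, so both commute with the operators $\tau_x$ and $\tau_y$ that extract top components in each set of variables; hence $\TT(X)$ is stable under $C_1 \times C_2$ and the second isomorphism above is $\symm_n \times C_1 \times C_2$-equivariant as required. Once this is in place, the remainder of the proof is a routine bigraded translation of \cite[Theorem 3.4]{OR20}.
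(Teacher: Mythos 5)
Your proposal is correct and follows essentially the same route as the paper's proof: the ungraded $\symm_n\times C_1\times C_2$-equivariant orbit harmonics isomorphism $\CC[X]\cong\CC[\xx_n,\yy_n]/\TT(X)$, trace evaluation via Springer's theorem for part (1), and passage to $G$-invariants with the Hilbert series evaluated at roots of unity for part (2). Your additional remark that $C_1\times C_2$ commutes with $\tau_x\circ\tau_y$ (so that $\TT(X)$ is stable and the isomorphism is equivariant) is a point the paper leaves implicit in its appeal to diagonal orbit harmonics, but it is not a different argument.
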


\begin{proof} Applying orbit harmonics to the action of $\symm_n \times C_1 \times C_2$ on $X$ 
yields an isomorphism of ungraded $\symm_n \times C_1 \times C_2$-modules
\begin{equation}
\label{ungraded-isomorphism}
\CC[X] \cong \CC[\xx_n,\yy_n]/\TT(X).
\end{equation}
Let $\zeta := \exp(2 \pi i / n)$.
To prove (1), apply Theorem~\ref{springer-theorem} to obtain that 
for any integers $r, s, k$, the trace of $(c_1^r, c_2^s, c^k) \in C_1\times C_2 \times C'$ acting on $\CC[\xx_n,\yy_n]/\TT(X)$ is given by $$\sum_{\lambda\vdash n} c_\lambda(\zeta_1^r, \zeta_2^s) f^{\lambda}(\zeta^k) = X(\zeta^r, \zeta_2^s, \zeta^k).$$
By the isomorphism \eqref{ungraded-isomorphism}, this coincides with the trace of $(c_1^r, c_2^s, c^k)$ which is the number of fixed
points of $(c_1^r, c_2^s, c^k)$ acting on $X$, completing the proof of (1).

For (2), we take $G$-invariants of both sides of the isomorphism \eqref{ungraded-isomorphism}
to get an isomorphism of $C_1\times C_2$-modules
\begin{equation}
\label{g-invariants}
\CC[X/G] \cong (\CC[\xx_n,\yy_n]/\TT(X))^G.
\end{equation}
Since $C_1\times C_2$ acts on the graded vector space $(\CC[\xx_n,\yy_n]/\TT(X))^G$ by a root of unity scaling for each $x$ and $y$ variables, the trace of $(c_1^r, c_2^s)$ on the right hand side of the isomorphism \eqref{g-invariants} is given by
$$[\Hilb( (\CC[\xx_n,\yy_n]/\TT(X))^G; q,t)]_{q = \zeta_1^r, t=\zeta_2^s} = X/G(\zeta_1^r,\zeta_2^s).$$ 
The trace of $(c_1^r, c_2^s)$ on the left hand side of the isomorphism \eqref{g-invariants} coincides with the number 
of $G$-orbits in $X/G$ fixed by $(c_1^r, c_2^s)$.
\end{proof}

\begin{remark}\label{remark sieving generator}
In order to obtain a sieving result involving a combinatorial set $X$ with a cyclic group action using Theorem~\ref{sieving-generator}, we must 
\begin{itemize}
    \item realize $X$ (or its quotient $X/G$) and the relevant action on it as a point locus in $\CC^{2n}$ and the compatible action,
    \item calculate the graded Frobenius image of $\CC[\xx_n,\yy_n]/\TT(X)$ or the Hilbert series of the quotient $\Hilb\left((\CC[\xx_n,\yy_n]/\TT(X))^G; q,t\right)$.
\end{itemize}

\end{remark}

\subsection{Orbit harmonics and Garsia--Haiman module}\label{Modules of Garsia--Haiman and Garsia--Procesi}
There is a way to understand the Garsia--Haiman module $\HH_\mu$ via orbit harmonics. Let $\mu$ be a partition of $n$ with $l(\mu)=l$ and $l(\mu')=l'$. Let $\{\alpha_0,\dots,\alpha_{l-1}\}$ and $\{\beta_0,\dots,\beta_{l'-1}\}$ be two sets of distinct complex numbers. Recall that a \emph{injective tableau} $T$ of shape $\mu\vdash n$ is a filling of cells of $\mu$ by integers $1,2,\dots,n$ without repetition. The collection of such tableaux will be denoted by $\mathbf{IT}(\mu)$. For each $T\in\mathbf{IT}(\mu)$, we assign a point $p_T\in\CC^{2n}$ by letting the $i$-th and the $(n+i)$-th coordinates of $p_T$ record the position of $i$ in $T$:
\begin{equation*}
    p_T=(\alpha_{y_T(1)},\dots,\alpha_{y_T(n)},\beta_{x_T(1)},\dots,\beta_{x_T(n)}),
\end{equation*}
where $x_T(i)$ and $y_T(i)$ are $x$ and $y$ coordinates of the cell which contains $i$ in $T$. For example, for a partition $\mu=(2,1)$ and an injective tableau $T=$\begin{young}
 2 \cr
 3& 1\cr
\end{young} of shape $\mu$, the point assigned for $T$ is $p_T=(\alpha_0, \alpha_1, \alpha_0, \beta_1, \beta_0, \beta_0)$. Let us denote the collection of points associated to the injective tableaux by
\begin{equation*}
    X_\mu=\{p_T\in\CC^{2n}:T\in\mathbf{IT}(\mu)\}.
\end{equation*}
Note that there are exactly $n!$ points in $X_\mu$. The point locus $X_\mu$ possesses a natural diagonal action of $\symm_n$: For $\sigma\in\symm_n$,
\begin{equation*}
    \sigma (x_1,\dots,x_n,y_1,\dots,y_n)=(x_{\sigma(1)},\dots,x_{\sigma(n)},y_{\sigma(1)},\dots,y_{\sigma(n)}).
\end{equation*}
Using orbit harmonics, one can promote the ungraded $\symm_n$-module $\CC[X_\mu]$ to the bigraded $\symm_n$-module. As usual, let $\mathbf{I}(X_\mu)$ be the ideal of polynomials in $\mathbb{C}[\xx_n,\yy_n]$ which vanish on $X$ and define a homogeneous ideal 
\begin{equation*}
    \mathbf{T}(X_\mu):=\langle\tau_x\circ\tau_y(f):f\in\mathbf{I}(X_\mu)\setminus\{0\}\rangle\subseteq\mathbb{C}[\xx_n,\yy_n].
\end{equation*}
Then the module $\RR_\mu:=\mathbb{C}[\xx_n,\yy_n]/\mathbf{T}(X_\mu)$ has an additional structure of (bi)graded $\mathfrak{S}_n$-module.

Garsia and Haiman \cite{GH96} proved that the Garsia--Haiman module $\HH_\mu$ embedds into this graded module $\RR_\mu$. Thanks to the $n!$-conjecture, we can conclude the following isomorphism between $\RR_\mu$ and $\HH_\mu$.
\begin{theorem}
\label{theorem:Garsia--Haiman module} We have an isomorphism as bigraded $\symm_n$-modules:
$$\RR_\mu\cong\CC\left[X_\mu\right]\cong \HH_\mu.$$
\end{theorem}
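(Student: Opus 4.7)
The plan is a short dimension-count argument that combines three inputs: the general orbit-harmonics principle recalled in Section~\ref{subsection Orbit harmonics and cyclic sieving}, the embedding of $\HH_\mu$ into $\RR_\mu$ constructed by Garsia and Haiman \cite{GH96}, and Haiman's proof \cite{Hai01} of the $n!$-conjecture. The idea is that all three modules $\CC[X_\mu]$, $\RR_\mu$, $\HH_\mu$ will be shown to have dimension $n!$, after which an existing bigraded $\symm_n$-equivariant inclusion pins down the isomorphisms.

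First I would establish the left isomorphism $\RR_\mu\cong\CC[X_\mu]$. The point is simply that $|X_\mu|=n!$: an injective tableau $T\in\mathbf{IT}(\mu)$ is nothing but a bijection from $\{1,\dots,n\}$ to the cells of $\mu$, and since $\{\alpha_0,\dots,\alpha_{l-1}\}$ and $\{\beta_0,\dots,\beta_{l'-1}\}$ are sets of distinct complex numbers, distinct injective tableaux produce distinct points $p_T\in\CC^{2n}$. Applying the diagonal orbit-harmonics isomorphism reviewed in Section~\ref{subsection Orbit harmonics and cyclic sieving} to the $\symm_n$-stable locus $X_\mu$ gives the $\symm_n$-equivariant chain
\begin{equation*}
\CC[X_\mu]\;\cong\;\CC[\xx_n,\yy_n]/\II(X_\mu)\;\cong\;\CC[\xx_n,\yy_n]/\TT(X_\mu)\;=\;\RR_\mu,
\end{equation*}
from which $\dim_{\CC}\RR_\mu=|X_\mu|=n!$. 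This settles the first claimed isomorphism as ungraded $\symm_n$-modules.

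Next I would deduce the right isomorphism $\RR_\mu\cong\HH_\mu$. By the construction of \cite{GH96}, the Vandermonde-type element $\Delta_\mu$ together with its partial derivatives maps naturally into $\RR_\mu$, and Garsia and Haiman show that this map is an injection of bigraded $\symm_n$-modules $\HH_\mu\hookrightarrow\RR_\mu$. Combined with Haiman's theorem \cite{Hai01} that $\dim_{\CC}\HH_\mu=n!$, and the dimension count $\dim_{\CC}\RR_\mu=n!$ already obtained, the embedding must be surjective for dimension reasons. Consequently $\HH_\mu\cong\RR_\mu$ as bigraded $\symm_n$-modules, finishing the proof.

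The plan contains no real obstacle provided one is willing to cite \cite{GH96,Hai01} as black boxes: the combinatorial content is just the bijection between injective tableaux and points of $X_\mu$, and the algebraic content is packaged in the orbit-harmonics formalism already set up. The hard part, of course, lies entirely inside the cited $n!$-theorem, which supplies the missing lower bound $\dim\HH_\mu\geq n!$ matching the upper bound given for free by $|X_\mu|$.
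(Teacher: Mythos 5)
Your proposal is correct and follows essentially the same route as the paper: the paper likewise combines the orbit-harmonics isomorphism $\CC[X_\mu]\cong\RR_\mu$ (with $|X_\mu|=n!$) with the Garsia--Haiman embedding $\HH_\mu\hookrightarrow\RR_\mu$ from \cite{GH96} and Haiman's $n!$-theorem \cite{Hai01} to force the embedding to be a bigraded isomorphism. No substantive differences to report.
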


\section{Proofs of main theorems}
\label{Section: proofs}
\subsection{A proof of Theorem~\ref{Main theorem 1}}
\label{subsection: proof of the main theorem}

We first construct a point locus $X_{(m^n)}$ associated to a rectangular partition $\mu=(m^n)$. Following Section~\ref{Modules of Garsia--Haiman and Garsia--Procesi}, to consider $X_{(m^n)}$ as a point locus in $\mathbb{C}^{2n}$, we choose two sets of distinct complex numbers $\{\alpha_0,\dots,\alpha_{n-1}\}$ and $\{\beta_0,\dots,\beta_{m-1}\}$. For our purpose, let $\zeta_1=\exp(\frac{2\pi i}{n})$ and $\zeta_2=\exp(\frac{2\pi i}{m})$, then set $\alpha_j=\zeta_1^{j}$ for $0\le j \le n-1$ and $\beta_k=\zeta_2^{k}$ for $0\le k \le m-1$. Then the corresponding locus $X_\mu$ possesses
\begin{itemize}
    \item diagonal action of $\symm_n$,
    \item action of a cyclic group $C_1$ of order $n$ acting by scaling a root of unity $\zeta_1$ to each $x$-coordinates, and 
    \item action of a cyclic group $C_2$ of order $m$ acting by scaling a root of unity $\zeta_2$ to each $y$-coordinates.
\end{itemize}

Now we can present a proof of Theorem~\ref{Main theorem 1}. By the construction above, $X_{(m^n)}$ has $\symm_n\times C_1\times C_2$ action which corresponds to permutation of letters, row rotation, and column rotation on $\mathbf{IT}(\mu)$, respectively. Combining an isomorphism between $\RR_\mu$ and $\HH_\mu$ (Theorem~\ref{theorem:Garsia--Haiman module}), the fraded Frobenius image of $\HH_\mu$ (Equation~\eqref{graded Frob=Macdonald}) and the sieving generating theorem (Theorem~\ref{sieving-generator}), the first bullet point of Theorem~\ref{Main theorem 1} immediately follows.

To proceed to the second bullet point, consider a composition $\nu$ of $mn$. For the Young subgroup $G=\symm_\nu=\symm_{\nu_1}\times\symm_{\nu_2}\cdots$ of $\nu$, the $G$-orbits of $X_{(m^n)}$ are in one-to-one correspondence with the set of $n\times m$ matrices with content equal to $\nu$. 

To obtain a sieving result for $X_{(m^n)}/G$, we must calculate the Hilbert series of $G$-fixed subspace of $\RR_\mu$. Let ${\bf 1}$ be the trivial representation of $\symm_\nu$. It is a standard fact that the induction of ${\bf 1}$ from $\symm_\nu$ to $\symm_n$
can be written as
\begin{equation*}
{\bf 1} \uparrow_{\symm_\nu}^{\symm_n} \cong \bigoplus_\lambda K_{\lambda,\nu} S^\lambda,
\end{equation*}
where $K_{\lambda,\nu}$ denotes a Kostka number. Applying Frobenius reciprocity, it follows that the dimension of the $\symm_\nu$-fixed subspace of 
the $\symm_n$-irreducible $S^{\lambda}$ is given by the character inner product:
\begin{equation*}
\dim (S^{\lambda})^{\symm_\nu} = \langle {\bf 1}, S^{\lambda} \downarrow^{\symm_n}_{\symm_\nu} \rangle_{\symm_\nu} = 
\langle {\bf 1} \uparrow_{\symm_\nu}^{\symm_n}, S^{\lambda} \rangle_{\symm_n} = K_{\lambda,\nu}.
\end{equation*}
Correspondingly, if $V$ is any bigraded $\symm_n$-module with Frobenius image
\begin{equation*}
    \grFrob(V;q,t)=\sum_{\lambda\vdash n}c_{\lambda}(q,t)S^\lambda,
\end{equation*}
the Hilbert series of $\symm_\nu$ fixed subspace will be
\begin{equation*}
    \Hilb(V^{\symm_\nu};q,t)=\sum_{\lambda\vdash n} c_{\lambda}(q,t)K_{\lambda,\nu}.
\end{equation*}
By (2) of Theorem~\ref{sieving-generator}, this concludes the second bullet point.

\subsection{A proof of Theorem~\ref{Main theorem 2} and Theorem~\ref{Main theorem 3}}
\label{subsection: proof of the main theorem 2}

In previous section, we proved that for a composition $\nu$ of $mn$, the triple $\left(X_{(m^n),\nu}, \ZZ_n\times \ZZ_m, \sum_\lambda \widetilde{K}_{\lambda,\mu}(q,t)K_{\lambda,\nu}\right)$ exhibits biCSP. Suppose, furthermore, $\nu$ has a cyclic symmetry of order $a$. Then the set $X_{(m^n),\nu}$ possesses another cyclic group action by adding $a$ modulo $l(\nu)$ to each entry. Therefore, it is natural to seek for a sieving result that reflects this additional cyclic group action. 


Before we begin, we recall the Tanisaki locus. For a composition $\nu\models d$, let $W_\nu$ be the set of length $d$ words $w=(w_1,\dots,w_d)$ of content $\nu$ ($i$ appears $\nu_i$ times). Let $\zeta=\exp\left(\frac{2\pi i}{l(\nu)}\right)$. We assign a point $p_w$ in $\CC^d$ so that we can realize $W_\nu$ as a point locus $Y_\nu$ (called the Tanisaki locus) in $\CC^d$ as follows:
\begin{equation*}
    p_w=(\zeta^{w_1},\dots,\zeta^{w_d}).
\end{equation*}
Garsia and Procesi \cite{GP92} proved that the T-ideal corresponding to the Tanisaki locus is given by the ideal generated by elementary symmetric polynomials with extra conditions (for precise definition of this `Tanisaki ideal', we refer \cite{GP92}). By orbit harmonics, there is an isomorphism
\begin{equation}\label{Garsia--Procesi module orbit harmonics}
    \CC[Y_\nu]\cong \LL_\nu:=\CC[\xx_d]/\TT(Y_\nu).
\end{equation}
Moreover, they showed that the graded Frobenius image coincides with the Hall--Littlewood symmetric function, $$\grFrob(\LL_\nu;q)=\widetilde{Q}_\nu(\xx;q).$$
Furthermore, if a composition $\nu$ has a cyclic symmetry of order $a$, the set $W_\nu$ has additional cyclic group action given by adding $a$ modulo $l(\nu)$ to each letter. This action corresponds with the action of scaling a root of unity $\zeta^a$ in each coordinates in $Y_\nu$. In this setting, the isomorphism~\eqref{Garsia--Procesi module orbit harmonics} extends to an isomorphism as graded $\symm_d\times C$-modules, where $C$ is a cyclic group of order $l(\nu)/a$.

Now let $\mu=(m^n)$ be a rectangular partition and $\nu$ be a composition of $mn$ with a cyclic symmetry of order $a$. Then the product $X_\mu\times Y_\nu$ carries an $\mathfrak{S}_{mn}\times \ZZ_n\times \ZZ_m \times \ZZ_{l(\nu)/a}$-action, where $\mathfrak{S}_{mn}$ acts diagonally on $X_\mu$ and $Y_\nu$ and the cyclic groups $\ZZ_n$, $\ZZ_m$ and $\ZZ_{l(\nu)/a}$ acts by row rotation on $X_\mu$, column rotation on $X_\mu$ and translation on the entries on $Y_\nu$, respectively. By Theorem~\ref{theorem:Garsia--Haiman module} and the isomorphism~\eqref{Garsia--Procesi module orbit harmonics}, we have an isomorphism
\begin{equation}\label{equation: isomorphism between product}
\CC[X_\mu\times Y_\nu]\cong \RR_\mu \otimes \LL_\nu
\end{equation}
as $\mathfrak{S}_{mn}\times\ZZ_n\times\ZZ_m\times\ZZ_{l(\nu)/a}$-modules. Since the graded Frobenius image of the module $\RR_\mu$ is given by the Macdonald polynomial and the graded Frobenius image of the Garsia--Procesi module $\LL_\nu$ is given by the Hall--Littlewood polynomial, the Frobenius image is given by
\begin{equation*}
\label{Frob Xmu times Ynu}
    \grFrob\left(\CC[X_\mu\times Y_\nu];q,t,z\right)=\sum_{\rho,\lambda,\lambda'\vdash mn} \widetilde{K}_{\lambda,\mu}(q,t)\widetilde{K}_{\lambda',\nu}(z)g^{\rho}_{\lambda,\lambda'}s_\rho,
\end{equation*}
where $g^\rho_{\lambda,\lambda'}$ denotes a Kronecker coefficient.
By taking isotypic components for a trivial representation $S^{(mn)}$ of $\mathfrak{S}_{mn}$ on both sides of equation~\eqref{equation: isomorphism between product}, we have
\begin{equation}\label{equation: isomorphism isotypic components}
\CC[X_\mu\times Y_\nu]^{\symm_{mn}}\cong [\RR_\mu\otimes \LL_\nu]^{\symm_{mn}}.
\end{equation}

There is a natural basis of $\CC[X_\mu\times Y_\nu]^{\mathfrak{S}_{mn}}$ indexed by $\mathfrak{S}_{mn}$-orbits of $X_\mu\times Y_\nu$, given by the sum of elements in each orbit. Note that each of these orbits corresponds to a $n\times m$ matrix with content equal to $\nu$. It is clear that the cyclic groups $\ZZ_n$, $\ZZ_{m}$ and $\ZZ_{l(\nu)/a}$ act on these matrices by row rotation, column rotation, and translation of the entries. For an element $g\in \ZZ_n\times \ZZ_m \times \ZZ_{l(\nu)/a}$, we can count the number of fixed points of $g$ in $(X_\mu\times Y_\nu)/{\mathfrak{S}_{mn}}$ is given by the trace of $g$ acting on the left hand side of the isomorphism~\eqref{equation: isomorphism isotypic components}. On the other hand, this can be calculated by trigraded Hilbert series 
\begin{equation}\label{Hilbert poly of S_mn invariant of R_mu times L_nu}
    \Hilb\left(\left[\RR_\mu\otimes \LL_\nu\right]^{\symm_{mn}};q,t,z\right)=\sum_{\lambda, \lambda'\vdash mn} \widetilde{K}_{\lambda,(m^n)}(q,t)\widetilde{K}_{\lambda',\nu}(z)g^{(mn)}_{\lambda,\lambda'},
\end{equation}
of $[\RR_\mu\otimes_{\CC}\LL_\nu]^{\mathfrak{S}_{mn}}$ at roots of unity. By Proposition~\ref{prop:Kronecker}, taking the coefficient of the Schur function $s_{(mn)}$ in Equation~\eqref{Hilbert poly of S_mn invariant of R_mu times L_nu}, we have the following polynomial
$$ \Hilb\left([\RR_\mu\otimes \LL_\nu]^{\symm_{mn}};q,t,z\right)=X_{\mu,\nu}(q,t,z):=\sum_{\lambda\vdash mn} \widetilde{K}_{\lambda,(m^n)}(q,t)\widetilde{K}_{\lambda,\nu}(z)$$
for sieving result. This proves Theorem~\ref{Main theorem 2}.
\begin{example}
Take $\mu=(2,2)$ and $\nu=(2,2)$. We have six $2\times2$ matrices with content equal to $\nu$ listed in the following.

\begin{figure}[h]
  $
    \begin{pmatrix}
    1 & 1 \\
    2 & 2
    \end{pmatrix}$
    \qquad$
    \begin{pmatrix}
    1 & 2\\
    1 & 2
    \end{pmatrix}$
    \qquad$
    \begin{pmatrix}
    1 & 2\\
    2 & 1
    \end{pmatrix}$
    \qquad$
    \begin{pmatrix}
    2 & 1\\
    1 & 2
    \end{pmatrix}$
    \qquad$
    \begin{pmatrix}
    2 & 1\\
    2 & 1
    \end{pmatrix}$
    \qquad$
    \begin{pmatrix}
    2 & 2\\
    1 & 1
    \end{pmatrix}$

  \end{figure}

Fixed points of $(0,1,1)\in\ZZ_2\times\ZZ_2\times\ZZ_2$ correspond to the following four matrices and there is no fixed point of $(0,0,1)\in\ZZ_2\times\ZZ_2\times\ZZ_2$

\begin{figure}[h]
  $
    \begin{pmatrix}
    1 & 2\\
    1 & 2
    \end{pmatrix}$
    \qquad$
    \begin{pmatrix}
    1 & 2\\
    2 & 1
    \end{pmatrix}$
    \qquad$
    \begin{pmatrix}
    2 & 1\\
    1 & 2
    \end{pmatrix}$
    \qquad$
    \begin{pmatrix}
    2 & 1\\
    2 & 1
    \end{pmatrix}$
  \end{figure}

The polynomial $X(q,t,z)$ is given by
\begin{align*}
X(q,t,z)
    &=\sum_{\lambda\vdash 4} \widetilde{K}_{\lambda,(2,2)}(q,t)\widetilde{K}_{\lambda,(2,2)}(z)\\
    &=\widetilde{K}_{(4),(2,2)}(q,t)\widetilde{K}_{(4),(2,2)}(z)+\widetilde{K}_{(3,1),(2,2)}(q,t)\widetilde{K}_{(3,1),(2,2)}(z)+\widetilde{K}_{(2,2),(2,2)}(q,t)\widetilde{K}_{(2,2),(2,2)}(z)\\
    &\equiv 3+qz+tz+qtz \qquad \operatorname{mod} \quad(q^2-1, t^2-1, z^2 -1).
\end{align*}
Note that $X(-1,1,-1)=4$ and $X(1,1,-1)=0$, which agrees with Theorem~\ref{Main theorem 2}.
\end{example}

To keep this paper concise, instead of providing a precise proof of Theorem~\ref{Main theorem 3}, we rather sketch a proof that is very similar to the proof of Theorem~\ref{Main theorem 2}. Let $l=mn=ab$ be a positive integer with two factorizations. Following the argument in the proof of Theorem~\ref{Main theorem 2}, we consider point locus $X_{(m^n)}$ to the $l\times l$ permutation matrices by the map $\phi$ defined in Section~\ref{Introduction}. We also consider $X_{(a^b)}$ as the set of $l\times l$ permutation matrices in the same way.

A product of groups $\symm_l\times \mathbb{Z}_n\times \mathbb{Z}_m$ acts on $X_{(m^n)}$ by permuting columns, external row rotation and internal row rotation, while a product of groups $\symm_l \times\mathbb{Z}_b\times \mathbb{Z}_a$ acts on $X_{(a^b)}$ by permuting columns, external row rotation and internal column rotation. Each $\mathfrak{S}_l$-orbits of $X_{(m^n)}\times X_{(a^b)}$ corresponds to a $l\times l$ permutation matrix. One can see that there is a natural action of $\mathbb{Z}_n\times \mathbb{Z}_m\times\mathbb{Z}_b\times \mathbb{Z}_a$ on orbits in $[X_{(m^n)}\times X_{(a^b)}]/\mathfrak{S}_l$ by external and internal row rotation, and external and internal column rotation.

On the other hand, let $\RR_{(m^n)}$ and $\RR_{(a^b)}$ be two bigraded ring obtained from point loci $X_{(m^n)}$ and $X_{(a^b)}$ by orbit harmonics. Since their graded Frobenius images are Macdonald polynomials, by Proposition~\ref{prop:Kronecker}, we have
\begin{align*}\label{Hilbert poly of S_mn invariant of R_mu times R_nu}
    \Hilb\left(\left[\RR_{(m^n)}\otimes \RR_{(a^b)}\right]^{\symm_{l}};q,t,z,w\right)&=\sum_{\lambda, \lambda'\vdash l} \widetilde{K}_{\lambda,(m^n)}(q,t)\widetilde{K}_{\lambda',(a^b)}(z,w)g^{(mn)}_{\lambda,\lambda'}\\
    &=\sum_{\lambda\vdash l} \widetilde{K}_{\lambda,(m^n)}(q,t)\widetilde{K}_{\lambda,(a^b)}(z,w).
\end{align*}
By applying orbit harmonics, we have an isomorphism
\[
\mathbb{C}\left[X_{(m^n)}\times X_{(a^b)}\right]^{\mathfrak{S}_l}\cong \left[\RR_{(m^n)}\otimes \RR_{(a^b)}\right]^{\symm_{l}}.
\]
Therefore, we can conclude that the triple $\left(\mathfrak{S}_l, \ZZ_n\times\ZZ_m\times\ZZ_{b}\times \ZZ_{a}, \mathfrak{S}_l(q,t,z,w)\right)$ exhibits the quadracyclic sieving phenomenon, where
$$\mathfrak{S}_l(q,t,z,w)=\sum_{\lambda\vdash l} \widetilde{K}_{\lambda,(m^n)}(q,t)\widetilde{K}_{\lambda,(a^b)}(z,w).$$

\section{Concluding remarks}\label{concluding remarks}

\subsection{Other combinatorial loci}
Recall that we obtained Theorem~\ref{Main theorem 2} and Theorem~\ref{Main theorem 3} by taking the tensor product of modules of Garsia--Haiman and Garsia--Procesi, and then taking $\symm_{mn}$-invariant part. We could replace one of those modules to obtain various sieving results. One way to do this is replacing one of the modules with the module $\RR_{n,k}$ defined in \cite{HRS18}. They defined this module to construct a graded $\symm_n$-module for the Delta conjecture at $t=0$. The module $\RR_{n,k}$ can also be obtained by applying orbit harmonics to the locus corresponding to the set of surjective functions from $[k]$ to $[n]$. For $mn\le k$ by taking $\symm_{mn}$ invariant part of $\RR_{(m^n)}\otimes \RR_{mn,k}$, we could obtain a triCSP for $n$ times $m$ matrices (or fillings of a rectangular partition) with entries given by nonempty set partitions of $[k]$ into $[mn]$ parts (which may be called the surjective tableaux).

This process can be applied to a broad class of modules obtained via orbit harmonics. One of the interesting modules which we did not consider in this paper is the module $\RR_{\mu,k}$ defined by Griffin \cite{Gri21}. This module is a common generalization of the Garsia--Procesi module and the module $\RR_{n,k}$ of Haglund--Rhoades--Shimozono and it is possible to obtain this module via orbit harmonics. 

\subsection{Combinatorial proof of main theorems}
Rhoades used two facts to prove cyclic sieving results involving $q$-Kostka polynomials \cite{Rho10}. The first one is about the evaluation of the Hall--Littlewood polynomial at a root of unity due to Lascoux, Leclerc and Thibbon \cite{LLT94, LLT97}. The second one is the rim hook correspondence of Stanton and White \cite{SW85}.

For the evaluation of the Macdonald polynomials of a rectangular partition at a root of unity, Descouens, and Morita gave a formula \cite{DM08}. If one can provide a counterpart for rim hook correspondence of Stanton--White in the setting of Theorem~\ref{Main theorem 1}, Theorem~\ref{Main theorem 2} or Theorem~\ref{Main theorem 3}, it would give more combinatorial proof of those.

\section{Acknowledgements}

The author is grateful to Brendon Rhoades for helpful conversations about orbit harmonics. The author also thanks anonymous referees for their careful reading and valuable comments. In particular, the author thanks a referee's suggestion to clearly spell out the connections and differences between the results in this paper and known CSP.

\bibliographystyle{alpha}

\begin{thebibliography}{RSW04}

\bibitem[AU20]{AU19}
Per Alexandersson and Joakim Uhlin.
\newblock Cyclic sieving, skew {M}acdonald polynomials and {S}chur positivity.
\newblock {\em Algebraic Combinatorics}, 3(4):913--939, 2020.

\bibitem[BRS08]{BRS08}
H{\'e}lene Barcelo, Victor Reiner, and Dennis Stanton.
\newblock Bimahonian distributions.
\newblock {\em Journal of the London Mathematical Society}, 77(3):627--646,
  2008.

\bibitem[DM08]{DM08}
Fran{\c{c}}ois Descouens and Hideaki Morita.
\newblock Factorization formulas for {M}acdonald polynomials.
\newblock {\em European Journal of Combinatorics}, 29(2):395--410, 2008.

\bibitem[GH93]{GH93}
Adriano~M Garsia and Mark Haiman.
\newblock A graded representation model for {M}acdonald's polynomials.
\newblock {\em Proceedings of the National Academy of Sciences},
  90(8):3607--3610, 1993.

\bibitem[GH96]{GH96}
Adriano~M Garsia and Mark Haiman.
\newblock Some natural bigraded $\symm_n$-modules.
\newblock {\em The Electronic Journal of Combinatorics (The Foata Festschrift
  volume)}, 3(2), 1996.

\bibitem[GP92]{GP92}
Adriano~M Garsia and Claudio Procesi.
\newblock On certain graded $\mathfrak{S}_n$-modules and the $q$-{K}ostka
  polynomials.
\newblock {\em Advances in Mathematics}, 94(1):82--138, 1992.

\bibitem[Gri21]{Gri21}
Sean Griffin.
\newblock Ordered set partitions, {G}arsia-{P}rocesi modules, and rank
  varieties.
\newblock {\em Transactions of the American Mathematical Society},
  374(4):2609--2660, 2021.

\bibitem[Hai01]{Hai01}
Mark Haiman.
\newblock Hilbert schemes, polygraphs and the {M}acdonald positivity
  conjecture.
\newblock {\em Journal of the American Mathematical Society}, 14(4):941--1006,
  2001.

\bibitem[HHL05]{HHL04}
Jim Haglund, Mark Haiman, and Nick Loehr.
\newblock A combinatorial formula for {M}acdonald polynomials.
\newblock {\em Journal of the American Mathematical Society}, 18(3):735--761,
  2005.

\bibitem[HRS18]{HRS18}
James Haglund, Brendon Rhoades, and Mark Shimozono.
\newblock Ordered set partitions, generalized coinvariant algebras, and the
  {D}elta conjecture.
\newblock {\em Advances in Mathematics}, 329:851--915, 2018.

\bibitem[LLT94]{LLT94}
Alain Lascoux, Bernard Leclerc, and Jean-Yves Thibon.
\newblock Green polynomials and {H}all-{L}ittlewood functions at roots of
  unity.
\newblock {\em European Journal of Combinatorics}, 15(2):173--180, 1994.

\bibitem[LLT97]{LLT97}
Alain Lascoux, Bernard Leclerc, and Jean-Yves Thibon.
\newblock Ribbon tableaux, {H}all--{L}ittlewood functions, quantum affine
  algebras, and unipotent varieties.
\newblock {\em Journal of Mathematical Physics}, 38(2):1041--1068, 1997.

\bibitem[Mac88]{Mac88}
Ian~G Macdonald.
\newblock A new class of symmetric functions.
\newblock {\em S{\'e}minaire Lotharingien de Combinatoire [electronic only]},
  20:B20a--41, 1988.

\bibitem[OR21]{OR20}
Jaeseong Oh and Brendon Rhoades.
\newblock Cyclic sieving and orbit harmonics.
\newblock {\em Mathematische Zeitschrift}, pages 1--22, 2021.

\bibitem[Rho10]{Rho10}
Brendon Rhoades.
\newblock Hall--{L}ittlewood polynomials and fixed point enumeration.
\newblock {\em Discrete mathematics}, 310(4):869--876, 2010.

\bibitem[RSW04]{RSW04}
Victor Reiner, Dennis Stanton, and Dennis White.
\newblock The cyclic sieving phenomenon.
\newblock {\em Journal of Combinatorial Theory, Series A}, 108(1):17--50, 2004.

\bibitem[Spr74]{Spr74}
Tonny~Albert Springer.
\newblock Regular elements of finite reflection groups.
\newblock {\em Inventiones mathematicae}, 25(2):159--198, 1974.

\bibitem[SW85]{SW85}
Dennis~W Stanton and Dennis~E White.
\newblock A {S}chensted algorithm for rim hook tableaux.
\newblock {\em Journal of Combinatorial Theory, Series A}, 40(2):211--247,
  1985.

\end{thebibliography}

\end{document}